\documentclass[12pt]{amsart}
\usepackage{hyperref}
\hypersetup{nesting=true,debug=true,naturalnames=true}
\usepackage{graphicx,amssymb,upref}
\usepackage{xcolor}
\title{Revisiting the Hales--Jewett Theorem }
\author{Arpita Ghosh}
\date{}
\curraddr{Department of Mathematics, University of Haifa, Israel}
\email{arpi.arpi16@gmail.com}

\subjclass[2020]{05D10, 05C55, 22A15}

\keywords{Ramsey Theory, Monochromatic patterns, Hales--Jewett Theorem, Algebra in the Stone-\v{C}ech compactification}

\newtheorem{theorem}{Theorem}
\newtheorem{lemma}{Lemma}
\newtheorem{prop}{Proposition}
\newtheorem{definition}{Definition}

\begin{document}

\begin{abstract}
This short note establishes an abstract Hales--Jewett theorem for semigroups equipped with a finite family of retractions. The proof relies on the interplay between retractions and tensor products of ultrafilters.
\end{abstract}

\maketitle

\section{Introduction}

Ramsey theory is founded on the principle that sufficiently large or sufficiently rich structures inevitably contain highly organized substructures. Among its deepest and most influential results is the Hales--Jewett theorem, established by Hales and Jewett in 1963~\cite{HalesJewett}. In its classical form, the theorem asserts that for every pair of positive integers $r$ and $n$, there exists an integer $N$ such that whenever the set of all words of length $N$ over an $n$-letter alphabet is colored with $r$ colors, one can find a monochromatic combinatorial line. This theorem is widely regarded as one of the cornerstones of modern Ramsey theory, and it has had profound consequences in combinatorics, ergodic theory, topological dynamics, and number theory.

The significance of the Hales--Jewett theorem lies not only in its intrinsic combinatorial depth, but also in its role as a unifying principle. It simultaneously generalizes several earlier partition theorems, including van der Waerden's theorem on arithmetic progressions~\cite{vdW}. In particular, many phenomena concerning monochromatic configurations can be understood as manifestations of the combinatorial richness captured by the Hales--Jewett framework.

Over the years, numerous extensions and reformulations of the Hales--Jewett theorem have been developed. Notable among these are density
version of the Hales–Jewett theorem by Furstenberg and Katznelson \cite{FurstenbergKatznelson} which generalizes Szem\'{e}redi’s theorem \cite{Szeme}, the Graham--Rothschild parameter sets theorem~\cite{GrahamRothschild}, and a polynomial Hales–Jewett theorem that extends the polynomial van
der Waerden theorem by Bergelson and Leibman \cite{BerLeib}. The theorem also admits elegant proofs from several different perspectives, including combinatorial, ergodic, and topological approaches. A particularly fruitful viewpoint arises from the algebra of the Stone--\v{C}ech compactification of discrete semigroups, pioneered by Ellis and Furstenberg, and developed systematically by Bergelson, Hindman, Strauss, Beiglb\"{o}ck \cite{Bergelson1986,HindmanStrauss,Beiglbock}.

In \cite{Kop}, Koppelberg introduced and proved an abstract version of Hales--Jewett theorem, and as an easy consequence,  obtained the classical Hales--Jewett theorem, van der Waerden’s theorem, and Gallai’s theorem as special cases.

The purpose of this paper is to provide an entirely algebraic proof of the abstract Hales--Jewett theorem in the setting of semigroups equipped with a finite family of retractions. Our proof relies on a careful analysis of the interaction between retractions and tensor products of ultrafilters. More precisely, we work in the following setting.

\begin{definition}
Let $(S,\bullet)$ be a semigroup, and let $T$ be a subsemigroup of $S$.
\begin{enumerate}
    \item We say that $T$ is a \emph{nice subsemigroup} of $S$ if the complement $R=S\setminus T$
    is a two-sided ideal of $S$. Equivalently, whenever $T\neq S$, one has
    $x\bullet y\in T
    \quad\Longleftrightarrow\quad$
    the product of two elements $x \bullet y$ of $S$ is in $T$ if and only if both $x$ and $y$ are in $T$.

    \item A \emph{retraction} from $S$ onto $T$ is a semigroup homomorphism
    $\sigma\colon S\to T$
    such that $\sigma$ restricts to the identity on $T$; that is, $
    \sigma(t)=t  \text{ for all } t\in T.$
\end{enumerate}
\end{definition}

For  $\Sigma$ be a finite family of retractions from $S$ onto $T$, we show that for every finite coloring of $T$, there exists an element $v\in S\setminus T$ such that the set $\{\sigma(v):\sigma\in\Sigma\}$ is monochromatic. This formulation recovers the classical Hales--Jewett theorem when $S$ is the semigroup of variable words, $T$ is the semigroup of constant words, and the retractions are the substitution maps obtained by replacing the variable by letters from the alphabet.

\vspace{0.5cm}
\noindent\textbf{Notation.}
\begin{enumerate} 
    \item For any semigroup homomorphism $f\colon S\to T$, we denote by
    $\widetilde{f}\colon \beta S\to \beta T$
    its continuous extension to the Stone--Čech compactifications.

    \item For $n\in \mathbb{N}$ and $\mathcal{U}\in \beta S$, we write
    \(
    \mathcal{U}^{\otimes n}
    =
    \underbrace{\mathcal{U}\otimes \cdots \otimes \mathcal{U}}_{n\text{ times}}
    \)
    for the $n$-fold tensor product of $\mathcal{U}$.

    \item For $n\in \mathbb{N}$ and $\mathcal{U}\in \beta S$, we write
    \(
    \mathcal{U}^{\bullet n}
    =
    \underbrace{\mathcal{U}\bullet \cdots \bullet \mathcal{U}}_{n\text{ times}}
    \)
    for the $n$-fold product of $\mathcal{U}$ in $\beta S$.
\end{enumerate}

\section{Preliminaries}

In this section we briefly recall the basic notions concerning ultrafilters, the Stone--\v{C}ech compactification, and tensor products of ultrafilters that will be used throughout the paper. Standard references for this material include~\cite{HindmanStrauss}.

An \emph{ultrafilter} on a nonempty set $X$ is a maximal filter on $X$; equivalently, it is a collection $\mathcal U$ of subsets of $X$ such that for every $A\subseteq X$, exactly one of $A$ and $X\setminus A$ belongs to $\mathcal U$. 

If $\mathcal U$ is an ultrafilter on $I$ and $f:I\to J$ is a map, then the \emph{image ultrafilter} $f(\mathcal U)$ on $J$ is given by
\[
A\in f(\mathcal U)\quad\Longleftrightarrow\quad f^{-1}(A)\in\mathcal U
\]
for all $A\subseteq J$.

For a discrete semigroup $S$, the \emph{Stone--\v{C}ech compactification} $\beta S$ is the set of all ultrafilters on $S$. Identifying each $s\in S$ with the principal ultrafilter at $s$, we regard $S$ as a dense subset of $\beta S$. For each $A\subseteq S$, the set
\[
\overline{A}=\{\mathcal U\in\beta S:A\in\mathcal U\}
\]
is clopen, and these sets form a basis for the compact Hausdorff topology on $\beta S$.

The semigroup operation on $S$ extends uniquely to a right-topological semigroup operation on $\beta S$. Explicitly, if $\mathcal{U},\mathcal{V}\in\beta S$ and $A\subseteq S$, then
\[
A\in\mathcal{U}\bullet\mathcal{V}
\quad\Longleftrightarrow\quad
\{s\in S:s^{-1}\bullet A\in\mathcal{V}\}\in\mathcal{U},
\]
where
\[
s^{-1}\bullet A=\{t\in S:s\bullet t\in A\}.
\]
With this operation, $(\beta S,\bullet)$ is a compact right-topological semigroup.

Let $I$ and $J$ be nonempty sets, and let $\mathcal{U}\in\beta I$ and $\mathcal{V}\in\beta J$. The \emph{tensor product} of $\mathcal{U}$ and $\mathcal{V}$ is the ultrafilter $\mathcal{U}\otimes\mathcal{V}$ on $I\times J$ defined by the rule
\[
X\in\mathcal{U}\otimes\mathcal{V}
\quad\Longleftrightarrow\quad
\{i\in I:\{j\in J:(i,j)\in X\}\in\mathcal{V}\}\in\mathcal{U}
\]
for every $X\subseteq I\times J$.

Equivalently, a subset $X\subseteq I\times J$ belongs to $\mathcal{U}\otimes\mathcal{V}$ if and only if, for $\mathcal{U}$-many $i\in I$, the vertical section
\[
X_i=\{j\in J:(i,j)\in X\}
\]
belongs to $\mathcal{V}$.

The tensor product is associative in the natural sense and provides a convenient mechanism for handling higher-dimensional combinatorial configurations.

\begin{prop}\label{pindultra}
Let $\sigma\in \Sigma$, and let $\Psi_\sigma:S^k\to T$ be the associated homomorphism given by $\Psi_\sigma(v_1, \cdots , v_k)= \sigma(v_1\bullet \cdots \bullet v_k)$. Then the continuous extension $\widetilde{\Psi_\sigma}:\beta(S^k)\to \beta T$
satisfies
\[
\widetilde{\Psi_\sigma}\bigl(\mathcal{V}^{\otimes k}\bigr)
   =\bigl(\widetilde{\sigma}(\mathcal{V})\bigr)^{\bullet k}
\]
for every ultrafilter $\mathcal{V}\in \beta S$, where $\widetilde{\sigma}$ is the continuous extension of $\sigma: S \rightarrow T$.
\end{prop}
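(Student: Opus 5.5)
We would prove the identity by induction on $k$, reducing it to two facts. The first is that $\widetilde{\sigma}$ is a homomorphism $\beta S\to\beta T$. The second is that the image of a tensor product under a product map is the corresponding product of ultrafilters. Write $\mathcal{W}=\widetilde{\sigma}(\mathcal{V})$. Since $\sigma$ is a homomorphism, $\Psi_\sigma(v_1,\dots,v_k)=\sigma(v_1)\bullet\cdots\bullet\sigma(v_k)$. Hence $\Psi_\sigma=m_k\circ(\sigma\times\cdots\times\sigma)$, where $m_k\colon T^k\to T$ is $k$-fold multiplication. We may then treat the two factors separately.

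\emph{Step 1: images of tensor products under coordinatewise maps.} For any map $f\colon I\to J$ and $\mathcal{U}_1,\dots,\mathcal{U}_k\in\beta I$, we check
\[
(f\times\cdots\times f)(\mathcal{U}_1\otimes\cdots\otimes\mathcal{U}_k)=f(\mathcal{U}_1)\otimes\cdots\otimes f(\mathcal{U}_k).
\]
For $k=2$ this is immediate from the definition. Take $X\subseteq J\times J$. The section of $(f\times f)^{-1}(X)$ at $i$ is $f^{-1}(X_{f(i)})$. Thus $(f\times f)^{-1}(X)\in\mathcal{U}_1\otimes\mathcal{U}_2$ if and only if $\{i: X_{f(i)}\in f(\mathcal{U}_2)\}\in\mathcal{U}_1$, which holds if and only if $X\in f(\mathcal{U}_1)\otimes f(\mathcal{U}_2)$. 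The general case follows by associativity of $\otimes$. Applying this with $f=\sigma$ gives $(\sigma\times\cdots\times\sigma)(\mathcal{V}^{\otimes k})=\mathcal{W}^{\otimes k}$.

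\emph{Step 2: multiplication pushes tensors to products.} For $\mathcal{U}_1,\dots,\mathcal{U}_k\in\beta T$ we show $m_k(\mathcal{U}_1\otimes\cdots\otimes\mathcal{U}_k)=\mathcal{U}_1\bullet\cdots\bullet\mathcal{U}_k$. For $k=2$, let $A\subseteq T$. The section of $m_2^{-1}(A)$ at $s$ is exactly $s^{-1}\bullet A$. So $m_2^{-1}(A)\in\mathcal{U}_1\otimes\mathcal{U}_2$ if and only if $\{s: s^{-1}\bullet A\in\mathcal{U}_2\}\in\mathcal{U}_1$, which is precisely the definition of $A\in\mathcal{U}_1\bullet\mathcal{U}_2$.

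For the induction, write $m_{k+1}=m_2\circ(m_k\times\mathrm{id})$ and identify $\mathcal{U}_1\otimes\cdots\otimes\mathcal{U}_{k+1}$ with $(\mathcal{U}_1\otimes\cdots\otimes\mathcal{U}_k)\otimes\mathcal{U}_{k+1}$. We also need the mixed version of Step 1: $(g\times h)(\mathcal{A}\otimes\mathcal{B})=g(\mathcal{A})\otimes h(\mathcal{B})$ for different maps $g,h$, which is proved by the same section computation. Together these give $m_{k+1}(\cdots)=(\mathcal{U}_1\bullet\cdots\bullet\mathcal{U}_k)\bullet\mathcal{U}_{k+1}$.

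Finally, we combine the two steps. Images of ultrafilters compose functorially, i.e.\ $(g\circ f)(\mathcal{U})=g(f(\mathcal{U}))$, and the continuous extension $\widetilde{\Psi_\sigma}$ on $\beta(S^k)$ is the image-ultrafilter map. Therefore
\[
\widetilde{\Psi_\sigma}(\mathcal{V}^{\otimes k})=m_k\bigl(\mathcal{W}^{\otimes k}\bigr)=\mathcal{W}^{\bullet k}.
\]
The main obstacle is bookkeeping rather than mathematics. We must be careful that the convention for $\otimes$ (outer quantifier over the first coordinate) matches the convention for $\bullet$ (outer quantifier over the left factor), and that the associativity identifications for $\otimes$ are compatible with the bracketing used for $\bullet$. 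With the conventions fixed in the Preliminaries, both quantify over the left coordinate first, so they align. Note also that Step 1 together with Step 2 avoids needing to know separately that $\widetilde\sigma$ is a homomorphism.
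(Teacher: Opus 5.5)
Your proof is correct. At $k=2$ it is the same computation as the paper's, but organized differently. The paper unwinds both $\Psi_\sigma^{-1}(A)\in\mathcal{V}^{\otimes 2}$ and $A\in\widetilde{\sigma}(\mathcal{V})^{\bullet 2}$ directly into the common condition $\{v_1:\{v_2:\sigma(v_1)\bullet\sigma(v_2)\in A\}\in\mathcal{V}\}\in\mathcal{V}$. You instead factor $\Psi_\sigma=m_k\circ(\sigma\times\cdots\times\sigma)$ and pass through the intermediate ultrafilter $\mathcal{W}^{\otimes k}$ on $T^k$, with $\mathcal{W}=\widetilde{\sigma}(\mathcal{V})$. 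This splits the computation into two general lemmas: push-forward of tensors under product maps, and $m_k$ sending tensors to products.

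What your organization buys is a proof of the general case. The paper only asserts that it is enough to treat $k=2$ and writes out that case alone. Your induction supplies exactly the reduction the paper leaves implicit, using $m_{k+1}=m_2\circ(m_k\times\mathrm{id})$, the mixed rule $(g\times h)(\mathcal{A}\otimes\mathcal{B})=g(\mathcal{A})\otimes h(\mathcal{B})$, and associativity of $\otimes$. Your check that both conventions quantify over the left coordinate first is what makes it go through. The paper's version is shorter, but complete only for $k=2$.

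One presentational point: your opening lists the homomorphism property of $\widetilde{\sigma}$ on $\beta S$ as one of the two ingredients, but your argument never uses it, as you note at the end. What it does use is only that $\sigma$ is a homomorphism on $S$, to rewrite $\sigma(v_1\bullet\cdots\bullet v_k)$ as $\sigma(v_1)\bullet\cdots\bullet\sigma(v_k)$; the paper uses the same fact. The opening sentence should name Steps 1 and 2 as the two ingredients.
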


\begin{proof}
Note that it is enough to prove for $k=2$. Let $A\subseteq W$. By definition,
$
A\in \widetilde{\Psi_\sigma}\bigl(\mathcal{V}^{\otimes 2}\bigr)
$
if and only if $\Psi_\sigma^{-1}(A)\in \mathcal{V}^{\otimes 2}.
$ By the definition of $\Psi_\sigma$ we have 
\[
\Psi_\sigma^{-1}(A)
=
\bigl\{(v_1, v_2)\in S^2:
\sigma(v_1)\bullet \sigma(v_2)\in A\bigr\} \in \mathcal{V}^{\otimes 2}.
\]
This implies that 
\[
\{v_1 \in S: \{v_2 \in S: \sigma(v_1)\bullet \sigma(v_2) \in A\} \in \mathcal{V}\} \in \mathcal{V}.
\]
On the other hand, by the definition of the product ultrafilter,
\[
A\in \bigl(\widetilde{\sigma}(\mathcal{V})\bigr)^{\bullet 2}
\iff
\bigl\{w_1\in T:
\{w_2\in T: w_1\bullet w_2\in A\}\in \widetilde{\sigma}(\mathcal{V})
\bigr\}\in \widetilde{\sigma}(\mathcal{V}).
\]
Unwinding the definition of $\widetilde{\sigma}(\mathcal{V})$, this is equivalent to
\[
\bigl\{w_1\in T:
\{v_2\in S: w_1\bullet \sigma(v_2)\in A\}\in \mathcal{V}
\bigr\}\in \widetilde{\sigma}(\mathcal{V}),
\]
and hence, again by the definition of $\widetilde{\sigma}(\mathcal{V})$,
\[
\bigl\{v_1\in S:
\{v_2\in S: \sigma(v_1)\bullet \sigma(v_2)\in A\}\in \mathcal{V}
\bigr\}\in \mathcal{V}.
\]

Hence
\[
\widetilde{\Psi_\sigma}\bigl(\mathcal{V}^{\otimes k}\bigr)
=
\bigl(\widetilde{\sigma}(\mathcal{V})\bigr)^k,
\]
as required.
\end{proof}

\section{The Proof of the main Theorem}
Let $(S,\bullet)$ be a semigroup and $T \subseteq S$ a nice subsemigroup, i.e., $R = S \setminus T$ is a two-sided ideal. Let $\Sigma$ be a finite family of retractions $\sigma: S \to T$. Let $\widetilde{\sigma}: \beta S \rightarrow \beta T$ be the continuous extension of $\sigma.$

\begin{lemma}\label{Lemma2}
The following statements are equivalent:

\begin{itemize}
    \item[(a)] Assume that $T$ is a nice subsemigroup of $S,$ and $\Sigma$ is a finite set of retractions from $S$ to $T.$ Moreover assume that $T=B_1 \cup \cdots \cup B_r$ is a coloring of $T$ with finitely many colors. Then there is some $v \in R=S \setminus T$ such that $\{\sigma(v) : \sigma \in \Sigma\}$ is monochromatic. 

   \item [(b)]There exists an ultrafilter $\mathcal{U}$ on $S$ such that
\[
\widetilde{\sigma}(\mathcal{U}) =\widetilde{\tau}(\mathcal{U}) \quad \forall \sigma,\tau \in \Sigma.
\]

\end{itemize}
\end{lemma}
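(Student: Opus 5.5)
The plan is to prove the two implications separately, working directly with the definition of the image ultrafilter $\widetilde{\sigma}(\mathcal U)=\sigma(\mathcal U)$. Statement (b) only has content if $\mathcal U$ is not concentrated on $T$: for a principal ultrafilter at some $t\in T$ one gets $\widetilde\sigma(t)=t=\widetilde\tau(t)$ trivially. So I will read (b) as asserting the existence of such a $\mathcal U$ with $R=S\setminus T\in\mathcal U$, and I will prove the equivalence in that form.

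\emph{(b) $\Rightarrow$ (a).} Let $\mathcal U$ be as in (b), with $R\in\mathcal U$, and let $p\in\beta T$ denote the common value of $\widetilde\sigma(\mathcal U)$ for $\sigma\in\Sigma$. Given a coloring $T=B_1\cup\cdots\cup B_r$, I will use that $p$ is an ultrafilter on $T$ to pick $i$ with $B_i\in p$. By the definition of the image ultrafilter, $\sigma^{-1}(B_i)\in\mathcal U$ for every $\sigma\in\Sigma$. Since $\Sigma$ is finite, the set
\[
R\cap\bigcap_{\sigma\in\Sigma}\sigma^{-1}(B_i)
\]
belongs to $\mathcal U$ and is therefore nonempty. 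Any $v$ in this set lies in $R$ and satisfies $\sigma(v)\in B_i$ for all $\sigma\in\Sigma$, which is exactly (a).

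\emph{(a) $\Rightarrow$ (b).} Here I will build $\mathcal U$ by a compactness (finite intersection property) argument. For each finite partition $\mathcal P$ of $T$, let $X_{\mathcal P}$ be the set of $v\in R$ such that $\{\sigma(v):\sigma\in\Sigma\}$ lies in a single cell of $\mathcal P$. By (a), each $X_{\mathcal P}$ is nonempty. Given finitely many partitions $\mathcal P_1,\dots,\mathcal P_m$, let $\mathcal Q$ be their common refinement, which is again a finite partition. A set monochromatic for $\mathcal Q$ is monochromatic for each $\mathcal P_j$, so
\[
\emptyset\neq X_{\mathcal Q}\subseteq X_{\mathcal P_1}\cap\cdots\cap X_{\mathcal P_m}.
\]
Hence the family $\{X_{\mathcal P}\}$, together with $R$ (which contains every $X_{\mathcal P}$), has the finite intersection property. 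It therefore extends to an ultrafilter $\mathcal U$ on $S$ with $R\in\mathcal U$.

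To check that $\mathcal U$ satisfies (b), fix $A\subseteq T$ and $\sigma,\tau\in\Sigma$, and take the partition $\mathcal P=\{A,T\setminus A\}$. For $v\in X_{\mathcal P}$ we have $\sigma(v)\in A$ if and only if $\tau(v)\in A$, so
\[
\sigma^{-1}(A)\cap X_{\mathcal P}=\tau^{-1}(A)\cap X_{\mathcal P}.
\]
Since $X_{\mathcal P}\in\mathcal U$, it follows that $\sigma^{-1}(A)\in\mathcal U$ if and only if $\tau^{-1}(A)\in\mathcal U$. As $A$ was arbitrary, $\widetilde\sigma(\mathcal U)=\widetilde\tau(\mathcal U)$.

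The main obstacle is not computational. It is getting the statement right: one must carry the condition $R\in\mathcal U$ through both directions, since without it (b) is vacuous and cannot imply (a). A secondary point is the closure of the family $\{X_{\mathcal P}\}$ under finite intersections, which the common-refinement step handles. Neither niceness of $T$ nor the homomorphism property of the retractions is needed for this lemma; those will matter only later, when producing such a $\mathcal U$ (presumably via Proposition~\ref{pindultra}).
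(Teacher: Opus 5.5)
Your proof is correct and follows the paper's argument in both directions. For (b)$\Rightarrow$(a) you pick the colour in the common image and intersect its preimages with $R$. For (a)$\Rightarrow$(b) you extend the sets of $\Sigma$-monochromatic points to an ultrafilter, with your common refinement playing the role of the paper's atoms $C_\chi=\bigcap_i A_i^{\chi(i)}$, and then compare $\sigma^{-1}(A)$ with $\tau^{-1}(A)$.

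The one difference is that you build $R\in\mathcal U$ into (b) and take $X_{\mathcal P}\subseteq R$. This is a genuine repair of the paper's argument, for two reasons:
\begin{itemize}
\item The paper derives $R\in\mathcal U$ from the claim that $\beta S\cdot\mathcal U$ is a minimal left ideal. Nothing in (b) provides this, and it fails for the principal ultrafilter at any $t\in T$.
\item The paper's sets $X_A\subseteq S$ all contain $T$, because $\sigma(t)=t$. So its finite-intersection step never uses (a) and does not force $R\in\mathcal U$.
\end{itemize}

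Your form is also what the main theorem actually needs. From $R\in\mathcal U$ you get $R^m\in\mathcal U^{\otimes m}$, so one can choose $w=v_1\bullet\cdots\bullet v_m\in R$ and hence $v=u\bullet w\bullet t\in R$. This is what justifies the paper's final ``we may ensure $v\in R$.''

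One small correction to your wording. Under the standing hypotheses, (a) is the theorem itself, so the literal implication (b)$\Rightarrow$(a) is not false. It is merely contentless, since (b) as written holds trivially.
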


\begin{proof}
$(b)\Rightarrow (a)$. Let us consider a finite coloring of $T$ as follows:
\[
T=B_1\cup\cdots\cup B_r.
\]
 Choose $i\in\{1,\dots,r\}$ such that
$B:=B_i\in \widetilde{\sigma}(\mathcal U).$ Since $\widetilde{\sigma}(\mathcal U)=\widetilde{\tau}(\mathcal U)
\mbox{ for all }\sigma,\tau\in\Sigma,$ it follows that
\[
\sigma^{-1}(B)\in\mathcal U
\qquad\text{for every }\sigma\in\Sigma.
\]
As $\Sigma$ is finite, we therefore have $\bigcap_{\sigma\in\Sigma}\sigma^{-1}(B)\in\mathcal U.$ Since $R$ is a two-sided ideal of $S$, its closure $\overline{R}:=\operatorname{cl}_{\beta S}(R)$ is a left ideal of $\beta S$. Because $\beta S\cdot\mathcal U$ is a minimal left ideal and
\[
\beta S\cdot\mathcal U\subseteq \overline{R},
\]
it follows in particular that $\mathcal U\in\overline{R}.$ Hence $R\in\mathcal U$. Consequently,
\[
R\cap\bigcap_{\sigma\in\Sigma}\sigma^{-1}(B)\in\mathcal U,
\]
so this set is nonempty. Choose
\[
v\in R\cap\bigcap_{\sigma\in\Sigma}\sigma^{-1}(B).
\]
Then $v\in R$, and for every $\sigma\in\Sigma$ we have $\sigma(v)\in B$. Therefore,
\[
\{\sigma(v):\sigma\in\Sigma\}\subseteq B=B_i,
\]
as required.

$ (a) \Rightarrow (b)$. For each $A \subseteq T$, define
\[
X_A = \{v \in S : \{\sigma(v): \sigma \in \Sigma\} \subseteq A \text{ or }  \{\sigma(v): \sigma \in \Sigma\} \subseteq A^c\}.
\]

We first show that the family $\mathcal{G}=\{X_A: A \subseteq T\}$ has the finite intersection property. Let $A_1,\dots,A_n \subseteq T$. 


For each function $\chi : \{1, \cdots ,n\} \rightarrow \{+,-\}$ define $C_{\chi} = \bigcap_{i=1}^n A_i^{\chi(i)}$  where we understand $A^+=A$ and $A^{-}=A^c.$ Consider a finite partition of  $W= \bigcup_{\chi}C_{\chi}$. Then by $(a)$, there exist $v \in R$ and $\chi$ such that $$\{\sigma(v) : \sigma \in \Sigma\} \subseteq C_{\chi}.$$ Hence for each $i$, $\{\sigma(v) : \sigma \in \Sigma\} \in A_i$ or $\{\sigma(v) : \sigma \in \Sigma\} \in A_i^c$. This implies that $v \in X_{A_i}$ for all $i,$ therefore $v \in \bigcap_{i=1}^nX_{A_i}.$ Thus $\mathcal{G}$ has finite intersection property, so there exists an ultrafilter on $\mathcal{U}$.

We now show that all images coincide. Suppose for contradiction that there exist $\sigma,\tau \in \Sigma$ and $A \subseteq T$ such that
\[
A \in \widetilde{\sigma}(\mathcal{U}) \quad \text{and} \quad A^c \in \widetilde{\tau}(\mathcal{U}).
\]
Then
\[
\sigma^{-1}(A) \in \mathcal{U}, \quad \tau^{-1}(A^c) \in \mathcal{U}.
\]
Hence
\[
Y := \sigma^{-1}(A) \cap \tau^{-1}(A^c) \in \mathcal{U}.
\]

But for every $v \in Y$, we have $\sigma(v) \in A$ and $\tau(v) \in A^c$, so
\[
v \notin X_A.
\]
Thus $Y \cap X(A) = \emptyset$, contradicting that both sets belong to $\mathcal{U}$. Therefore,
\[
\widetilde{\sigma}(\mathcal{U}) = \widetilde{\tau}(\mathcal{U}) \quad \forall \sigma,\tau \in \Sigma.
\]
Hence the result follows.
\end{proof}

\begin{theorem}[Hales--Jewett Theorem]
Assume that $T$ is a nice subsemigroup of $S,$ and $\Sigma$ is a finite set of retractions from $S$ to $T.$ Moreover assume that $T=B_1 \cup \cdots \cup B_r$ is a coloring of $T$ with finitely many colors. Then there is some $v \in R=S \setminus T$ such that $\{\sigma(v) : \sigma \in \Sigma\}$ is monochromatic.
\end{theorem}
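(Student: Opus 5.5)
The plan is to verify condition (b) of Lemma~\ref{Lemma2} and then apply the implication (b)$\Rightarrow$(a). So we must produce an ultrafilter $\mathcal U$ on $S$ with $R\in\mathcal U$ and $\widetilde{\sigma}(\mathcal U)=\widetilde{\tau}(\mathcal U)$ for all $\sigma,\tau\in\Sigma$. We take $\mathcal U$ to be a minimal idempotent of $\beta S$ sitting below a minimal idempotent of $\beta T$, in the spirit of Koppelberg's argument. We assume $R\neq\emptyset$, since otherwise the statement is vacuous or false.

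\textbf{Step 1: setup.} We identify $\beta T$ with $\overline{T}\subseteq\beta S$. Because $T$ is a subsemigroup, $\overline{T}$ is a closed subsemigroup of $\beta S$. Each $\widetilde{\sigma}\colon\beta S\to\beta T$ is the continuous extension of a homomorphism into a subsemigroup. By the standard result (Hindman--Strauss, Cor.~4.22) it is therefore a semigroup homomorphism, and it restricts to the identity on $\beta T$, since $\sigma$ is the identity on the dense set $T$.

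\textbf{Step 2: choice of $\mathcal U$.} Fix a minimal idempotent $\mathcal P$ of $\beta T$, that is, an idempotent in $K(\beta T)$. By the standard fact that below every idempotent of a compact right topological semigroup there is a minimal one (Hindman--Strauss, Thm.~1.60), choose an idempotent $\mathcal U\in K(\beta S)$ with $\mathcal U\le\mathcal P$. This means $\mathcal U=\mathcal U\bullet\mathcal P=\mathcal P\bullet\mathcal U$. Since $R$ is a nonempty two-sided ideal of $S$, the closure $\overline R$ is a two-sided ideal of $\beta S$. Hence $K(\beta S)\subseteq\overline R$, so $R\in\mathcal U$, which is exactly what the (b)$\Rightarrow$(a) argument uses.

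\textbf{Step 3: all images coincide.} For $\sigma\in\Sigma$, the element $\widetilde\sigma(\mathcal U)$ is an idempotent of $\beta T$. Applying the homomorphism $\widetilde\sigma$ to $\mathcal U=\mathcal U\bullet\mathcal P=\mathcal P\bullet\mathcal U$ and using $\widetilde\sigma(\mathcal P)=\mathcal P$ gives $\widetilde\sigma(\mathcal U)\le\mathcal P$ in $\beta T$. Since $\mathcal P$ is minimal in $\beta T$, every idempotent of $\beta T$ below $\mathcal P$ equals $\mathcal P$. Thus $\widetilde\sigma(\mathcal U)=\mathcal P$ for every $\sigma\in\Sigma$, which is (b). Lemma~\ref{Lemma2} then yields $v\in R$ with $\{\sigma(v):\sigma\in\Sigma\}\subseteq B_i$ for some $i$.

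\textbf{Main obstacle.} The delicate point is the bookkeeping in Steps 1 and 2. We must be sure that $\widetilde\sigma$ is a genuine homomorphism fixing $\beta T$ pointwise, which uses that $T$ is a subsemigroup and $\sigma|_T=\mathrm{id}$. We also need the minimal idempotent $\mathcal U$ to be chosen below $\mathcal P$ rather than arbitrarily. This ordering is what forces $\widetilde\sigma(\mathcal U)$ to be independent of $\sigma$; an arbitrary minimal idempotent of $\beta S$ would not suffice.
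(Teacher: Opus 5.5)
Your proof is correct and complete, but it takes a different route from the paper. You use the standard minimal-idempotent argument. With $\mathcal U\in K(\beta S)$ an idempotent below a minimal idempotent $\mathcal P$ of $\beta T$, the fact that each $\widetilde\sigma$ is a homomorphism fixing $\overline T$ pointwise gives $\widetilde\sigma(\mathcal U)\le\mathcal P$. Hence $\widetilde\sigma(\mathcal U)=\mathcal P$ for all $\sigma\in\Sigma$ simultaneously, with no induction.

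The paper instead inducts on $|\Sigma|$. The case $\Sigma\setminus\{\sigma_k\}$, passed through the compactness direction (a)$\Rightarrow$(b) of Lemma~\ref{Lemma2}, yields $\mathcal U$ with $\widetilde\sigma(\mathcal U)=\mathcal V$ for all $\sigma\neq\sigma_k$; the image $\mathcal W=\widetilde{\sigma_k}(\mathcal U)$ may differ. Pigeonholing the $r+1$ ultrafilters $\mathcal W^{\bullet i}\bullet\mathcal V^{\bullet(r+2-i)}$ gives a colour $B_j$ lying in two of them. Proposition~\ref{pindultra} applied to $\mathcal U^{\otimes m}$ then produces $v=u\bullet w\bullet t$. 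This is an ultrafilter transcription of the classical colour-focusing induction. It needs no idempotents or minimal ideals, only the product in $\beta T$, tensor products, compactness and pigeonhole. But it is longer, and it must carry $R\in\mathcal U$ through the induction, since that is needed at the end to get $w\in R$; the paper leaves this implicit.

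Your route is shorter and proves more: every member of $\mathcal P$, not merely some colour class, contains $\{\sigma(v):\sigma\in\Sigma\}$ for some $v\in R$. You were also right to derive $R\in\mathcal U$ yourself from $K(\beta S)\subseteq\overline R$ rather than rely on Lemma~\ref{Lemma2} as stated. Without that requirement, (b) is satisfied by the principal ultrafilter at any $t\in T$. Moreover, the paper's assertion that $\beta S\bullet\mathcal U$ is a minimal left ideal is justified only for $\mathcal U\in K(\beta S)$, which your construction supplies. Your caveat $R\neq\emptyset$ is likewise genuinely needed.
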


\begin{proof}
We will use induction method on cardinality of $\Sigma.$ For $|\Sigma|=1$, this is trivially true. Let us assume that $|\Sigma|=k$ with $\Sigma= \{\sigma_1, \cdots, \sigma_k\}$. 
Let $\mathcal{U}$ be an ultrafiler of $S$ as in the Lemma \ref{Lemma2} and by induction hypothesis set $\mathcal{V} := \widetilde{\sigma}(\mathcal{U})$ for any $\sigma \in \Sigma \setminus  \{\sigma_k\}$. Now consider $\mathcal{W }= \widetilde{\sigma_k}(\mathcal{U}).$

Define ultrafilters
\[
\mathcal{Z}_{i} := \mathcal{W}^{\bullet i} \bullet \mathcal{V}^{\bullet (r+2-i)}, \quad i=1,\dots,r+1.
\]

Each $\mathcal{Z}_i$ contains one of the colors $B_1,\dots,B_r$. Since there are $r+1$ ultrafilters, there exist $p<q$ and $j$ such that
\[
B_j \in \mathcal{Z}_p \cap \mathcal{Z}_q.
\]

Define a set
\[
\Gamma := \{ v \in T : v^{-1} \bullet B_j \in \mathcal{V}^{\bullet (r+2-q)} \} 
.\]
Here $v^{-1} \bullet B_j= \{w \in T : v\bullet w \in B_j\}.$
Since $B_j \in \mathcal{Z}_p$ and $B_j \in \mathcal{Z}_q$, it follows that
\[
\Gamma \in \mathcal{W}^{\bullet p} \bullet \mathcal{V}^{\bullet (q-p)} \quad \text{and} \quad
\Gamma \in \mathcal{W}^{\bullet q} = \mathcal{W}^{\bullet p} \bullet \mathcal{W}^{\bullet (q-p)}.
\]

Define
\[
\Gamma_1 := \{ u \in T : u^{-1} \bullet \Gamma \in \mathcal{V}^{\bullet (q-p)} \},
\]
\[
\Gamma_2 := \{ u \in T : u^{-1}\bullet \Gamma \in \mathcal{W}^{\bullet (q-p)} \}.
\]
Then $\Gamma_1, \Gamma_2 \in \mathcal{W}^{\bullet p}$.
Hence $\Gamma_1\cap\Gamma_2\in\mathcal{W}^{\bullet p},$ so this set is nonempty. Choose
\[
u\in \Gamma_1\cap\Gamma_2.
\]
Therefore, we have $u^{-1} \bullet \Gamma \in \mathcal{V}^{\bullet (q-p)} \cap \mathcal{W}^{\bullet (q-p)}.$

Let $m = q-p$ and consider the tensor product ultrafilter $\mathcal{U}^{\otimes m}$ on $S^m$. For each $\sigma \in \Sigma$, define a map $\Psi_\sigma : S^m \to T$ given by the following formula:
\[
\Psi_\sigma(v_1,\dots,v_m) := \sigma(v_1 \bullet \cdots \bullet v_m).
\]
Then by Proposition \ref{pindultra},

\[
\widetilde{\Psi}_\sigma(\mathcal{U}^{\otimes m})
=
\begin{cases}
\widetilde{\sigma}(\mathcal{U})^{\bullet m}
=\mathcal{V}^{\bullet m},
& \text{if } \sigma\in \Sigma\setminus\{\sigma_k\}.\\ \widetilde{\sigma_k}(\mathcal{U})^{\bullet m} = \mathcal{W}^{\bullet m} & \text{if } \sigma=\sigma_k.
\end{cases}
\]

Hence
\[
\bigcap_{\sigma \in \Sigma} \Psi_\sigma^{-1}(u^{-1} \bullet \Gamma) \in \mathcal{U}^{\otimes m}.
\]
Choose $(v_1,\dots,v_m)$ in the above intersection and we have $\Psi_{\sigma}(v_1, \cdots , v_m) \in u^{-1} \bullet \Gamma.$ Now set $w = v_1 \bullet \cdots \bullet v_m$. Then for every $\sigma \in \Sigma$, $\sigma(w) \in u^{-1} \bullet \Gamma,$
so
\[
u \bullet \sigma(w) \in \Gamma.
\]
For each $\sigma\in\Sigma$, the inclusion $u \bullet \sigma(w)\in\Gamma$ implies that $(u \bullet \sigma(w))^{-1} \bullet B_j\in\mathcal{V}^{\bullet(r+2-q)}.
$
Since $\Sigma$ is finite, it follows that
\[
\bigcap_{\sigma\in\Sigma}(u \bullet \sigma(w))^{-1} \bullet B_j
\in\mathcal{V}^{\bullet(r+2-q)}.
\]
Hence this set is nonempty, and we may choose
\[
t\in \bigcap_{\sigma\in\Sigma}(u \bullet \sigma(w))^{-1} \bullet B_j.
\]

Now define $v:=u \bullet w \bullet t.$ Since $u,t\in T$ and each $\sigma\in\Sigma$ is a retraction onto $T$, we have 
\[
\sigma(v)=\sigma(u) \bullet \sigma(w) \bullet \sigma(t)=u \bullet \sigma(w) \bullet t\in B_j
\]
for every $\sigma\in\Sigma$. Finally, since $R = S \setminus T$ is a two-sided ideal, we may ensure that $v \in R$.
\end{proof}


\begin{thebibliography}{99}

\bibitem{Beiglbock}
M. ~Beiglb\"{o}ck,
\newblock A variant of the Hales-Jewett theorem.
\newblock \emph{Bull. Lond. Math. Soc. 
} \textbf{40} (2008), no. 2, 210–216.

\bibitem{Bergelson1986}
V.~Bergelson,
\newblock Ergodic Ramsey theory---an update,
\newblock \emph{Ergodic Theory of $\mathbb{Z}^d$ Actions (Warwick, 1993--1994)},
London Math. Soc. Lecture Note Ser. \textbf{228}, Cambridge Univ. Press, Cambridge, 1996, pp.~1--61.

\bibitem{BerLeib}
V. Bergelson and A. Leibman, 
\newblock Polynomial extensions of van der Waerden’s and Szemer\'edi’s theorems,
\newblock \emph{J. Amer. Math. Soc.} \textbf{9} (1996) 725–753.

\bibitem{FurstenbergKatznelson}
H.~Furstenberg and Y.~Katznelson,
\newblock A density version of the Hales--Jewett theorem,
\newblock \emph{J. Anal. Math.} \textbf{57} (1991), 64--119.

\bibitem{GrahamRothschild}
R.~L.~Graham and B.~L.~Rothschild,
\newblock Ramsey's theorem for $n$-parameter sets,
\newblock \emph{Trans. Amer. Math. Soc.} \textbf{159} (1971), 257--292.

\bibitem{HalesJewett}
A.~W.~Hales and R.~I.~Jewett,
\newblock Regularity and positional games,
\newblock \emph{Trans. Amer. Math. Soc.} \textbf{106} (1963), 222--229.

\bibitem{HindmanStrauss}
N.~Hindman and D.~Strauss,
\newblock \emph{Algebra in the Stone--\v{C}ech Compactification: Theory and Applications},
2nd ed., de Gruyter, Berlin, 2012.

\bibitem{Kop}
S.~Koppelberg,
\newblock The Hales-Jewett theorem via retractions.
\newblock \emph{Topology Proc.} \textbf{28} (2004), no. 2, 595–601.

\bibitem{Szeme}
E. ~Szem\'{e}redi, 
\newblock On sets of integers containing no k elements in arithmetic progression,
\newblock \emph{Acta. Math.}  \textbf{27}
(1975) 199–245

\bibitem{vdW}
B.~L.~van der Waerden,
\newblock Beweis einer Baudetschen Vermutung,
\newblock \emph{Nieuw Arch. Wiskd.} \textbf{15} (1927), 212--216.



\end{thebibliography}
\end{document}